\documentclass{amsart}
\begin{document}
%
%
%
 \newtheorem{thm}{Theorem}[section]
 \newtheorem{cor}[thm]{Corollary}
 \newtheorem{lem}[thm]{Lemma}
 \newtheorem{prop}[thm]{Proposition}
 \theoremstyle{definition}
 \newtheorem{defn}[thm]{Definition}
 \theoremstyle{remark}
 \newtheorem{rem}[thm]{Remark}
 \newtheorem*{ex}{Example}
 \numberwithin{equation}{section}
 \def\e{{\rm e}}
\def\F{\mathcal{F}}
\def\R{\mathbb{R}}
\def\T{\mathbf{T}}
\def\N{\mathbb{N}}
\def\K{\mathbf{K}}
\def\Q{\mathbf{Q}}
\def\M{\mathbf{M}}
\def\O{\mathbf{O}}
\def\C{\mathbb{C}}
\def\P{\mathbf{P}}
\def\Z{\mathbb{Z}}
\def\H{\mathcal{H}}
\def\A{\mathbf{A}}
\def\V{\mathbf{V}}
\def\AA{\overline{\mathbf{A}}}
\def\B{\mathbf{B}}
\def\c{\mathbf{C}}
\def\L{\mathbf{L}}
\def\S{\mathbb{S}}
\def\H{\mathcal{H}}
\def\I{\mathbf{I}}
\def\Y{{\rm Y}}
\def\f{\mathbf{f}}
\def\z{\mathbf{z}}
\def\d{\hat{d}}
\def\y{\mathbf{y}}
\def\w{\mathbf{w}}
\def\b{\mathcal{B}}
\def\s{\mathcal{S}}
\def\cc{\mathcal{C}}
\def\co{{\rm co}\,}
\def\vol{{\rm vol}\,}
\def\om{\mathbf{\Omega}}

%
%
%
%
%
%
%
%
%
\title{Certificates of convexity for basic semi-algebraic sets}

\author{JB. Lasserre}
\address{LAAS-CNRS and Institute of Mathematics\\
University of Toulouse\\
LAAS, 7 avenue du Colonel Roche\\
31077 Toulouse C\'edex 4\\
France}
\email{lasserre@laas.fr}
\thanks{This work was completed with the support of 
the (french) ANR grant NT05-3-41612.}

\subjclass{Primary 14P10; Secondary 11E25 52A20 90C22}

\keywords{Computational geometry; basic semi-algebraic sets; convexity; semidefinite programming}

\date{}

\begin{abstract}
We provide two certificates of convexity
for arbitrary basic closed semi-algebraic sets of $\R^n$. The first one is based on a necessary and sufficient
condition whereas the second one is based on a sufficient (but simpler) condition only.
Both certificates are obtained from any feasible solution
of a related semidefinite program and so in principle, can be obtained numerically
(however, up to machine precision).
\end{abstract}

\maketitle

\section{Introduction}~

With $\R[x]$ being the ring of real polynomials in the variables $x_1,\ldots,x_n$, 
consider the basic closed semi-algebraic set $\K\subset\R^n$ defined by:
\begin{equation}
\label{setk}
\K:=\,\{x\in\R^n\::\:g_j(x)\geq0,\quad j=1,\ldots,m\}
\end{equation}
for some given polynomials $g_j\in\R[x]$, $j=1,\ldots,m$. 

By definition, $\K\subset\R^n$ is convex if and only if
\begin{equation}
\label{usual}
x,\,y\in\K\quad\Rightarrow\quad \lambda\, x+(1-\lambda)\, y\in\K \quad\forall \lambda\in [0,1].
\end{equation}
The above geometric condition does {\it not} depend on the representation of $\K$ but 
requires uncountably many tests and so cannot be checked in general. 

Of course concavity of $g_j$ for every $j=1,\ldots,m$, provides a certificate of convexity for $\K$ 
but not every convex set $\K$ in (\ref{setk}) is defined by concave polynomials.
Hence an important issue is to analyze whether 
there exists a necessary and sufficient condition of convexity in terms of the representation (\ref{setk}) of
$\K$ because after all, very often (\ref{setk}) is the only information available about $\K$. 
Moreover, a highly desirable feature would be that such a condition 
can be checked, at least numerically.

In a recent work \cite{lasserreconvex}, the author has provided an algorithm to obtain a numerical 
{\it certificate} of convexity for $\K$ in (\ref{setk}) by using the condition:
\begin{equation}
\label{old}
\langle \nabla g_j(y),x-y\rangle\,\geq\,0,\qquad \forall x,y\in\K \mbox{ with }g_j(y)=0,\end{equation}
which is equivalent to (\ref{usual}) provided that Slater\footnote{Slater condition holds if there exists $x_0\in\K$ such that $g_k(x_0)>0$ for every $k=1,\ldots,m$.} condition holds and the nondegeneracy condition
$\nabla g_j(y)\neq0$ holds whenever $y\in \K$ and $g_j(y)=0$. 
This certificate consists of an integer $p_j$ and 
two polynomials $\theta^j_1,\theta^j_2\in\R[x,y]$ for each $j=1,\ldots,m$, and their characterization obviously implies that (\ref{old}) holds true and so $\K$ is convex (whence the name certificate); see Lasserre \cite[Corollary 4.4]{lasserreconvex}.
More precisely, for every $j=1,\ldots,m$, define the $2m+1$ polynomials 
$h_\ell\in\R[x,y]$ by $h_\ell(x,y)=g_\ell(x)$, $h_{m+\ell}(x,y)=g_\ell(y)$
for every $l=1,\ldots,m$, and $h_{2m+1}(x,y)=-g_j(y)$.
The {\it preordering} $P_j\subset\R[x,y]$ generated by the
polynomials $(h_\ell)\subset\R[x,y]$
is defined by:
\begin{equation}
\label{preordering1}
P_j\,=\left\{ \sum_{J\subseteq\{1,\ldots,2m+1\}}
\sigma_J(x,y)\left(\prod_{\ell\in J}h_\ell(x,y)\right)\::\:\quad \sigma_J\in\Sigma[x,y]\,\right\},\end{equation}
where $\Sigma[x,y]\subset\R[x,y]$ is the set of polynomials that are {\it sums of squares}
(in short s.o.s.), and  where by convention, $\prod_{\ell\in J} h_\ell(x,y)=1$ when $J=\emptyset$.
Then by a direct application of 
Stengle's Positivstellensatz \cite[Theor. 4.4.2, p. 92]{roy}
(more precisely, a Nichtnegativstellensatz version)
(\ref{old}) holds if and only if
\begin{equation}
\label{neweq1}
\theta^j_1(x,y)\langle \nabla g_j(y),x-y\rangle =(\langle \nabla g_j(y),x-y\rangle)^{2p_j}
+\theta_2^j(x,y),\end{equation}
for  some integer $p_j$ and some polynomials $\theta^j_1,\theta^j_2\in P_j$.
In addition, bounds $(p,d)$ are available
for the integer $p_j$ and the degrees of the s.o.s. polynomials $\sigma_J$ 
 appearing in the definition (\ref{preordering1}) of polynomials $\theta^j_1,\theta^j_2\in P_j$, respectively.
Observe that in (\ref{neweq1}) one may replace $p_j$ with the fixed bound $p$
(multiply each side with $(\langle \nabla g_j(y),x-y\rangle)^{2(p-p_j)}$) and take $d:=d+p$.
Next, recall that s.o.s. polynomials of bounded degree can be obtained
from feasible solutions of an appropriate {\it semidefinite program}\footnote{
A semidefinite program is a convex optimization problem
with the nice property that it can be solved efficiently. More precisely, up to arbitrary fixed precision, it can be solved in time polynomial in its input size.
For more details on semidefinite programming and its applications, the interested reader is referred to e.g. \cite{boyd}.}
(see e.g. \cite{lasserresiopt}).
Hence, in principle, checking whether (\ref{neweq1}) has a feasible
solution $(\theta^j_1,\theta^j_2)$ reduces to checking whether a {\it single} semidefinite program has a feasible solution.

And so, when both Slater and the nondegeneracy condition hold,
checking whether $\K$ is convex reduces to checking if each of the semidefinite programs
associated with (\ref{neweq1}), $j=1,\ldots,m$,
has a feasible solution.
When $\K$ is convex, the $2m$ polynomials
$\theta^j_1,\theta^j_2\in\R[x,y]$, $j=1,\ldots,m$, provide the desired certificate
of convexity through (\ref{old}); see \cite[Corollary 4.4]{lasserreconvex}.
However, it is only a {\it numerical} certificate because it comes
from the output of a numerical algorithm, and so subject to anavoidable numerical inaccuracies.
Moreover, the size of each semidefinite program equivalent to (\ref{neweq1})
is out of reach for practical computation, and in practice, one will
solve a semidefinite program associated with (\ref{neweq1}) but
for reasonable bounds $(p',d')\ll (p,d)$, hoping to obtain a solution when $\K$ is convex.
An alternative and more tractable certificate of convexity
using {\it quadratic modules} rather than preorderings 
is also provided in \cite[Assumption 4.6]{lasserreconvex}, but it only provides
 a sufficient condition of convexity (almost necessary when $\K$
 is compact and satisfies some technical condition).
  
The present contribution is to provide a certificate of convexity for 
{\it arbitrary} basic closed semi-algebraic sets (\ref{setk}), i.e., with {\it no} assumption on $\K$. This time,
by certificate we mean an obvious guarantee that
the geometric condition (\ref{usual}) holds true (instead of (\ref{old}) in \cite{lasserreconvex}).
To the best of our knowledge, and despite the result is almost straightforward, it is the first 
of this type for arbitrary basic closed semi-algebraic sets.
As in \cite{lasserreconvex} our certificate also consists of 
two polynomials of $\R[x,y]$ 
and is also based on the powerful Stengle's Positivstellensatz in real algebraic geometry.
In addition, a numerical certificate can also be obtained as the output of a
semidefinite program (hence valid only up to machine precision).
We also provide another certificate based on a simpler
characterization which now uses only a sufficient condition 
for a polynomial to be nonnegative on $\K$; so in this case, even if $\K$ is convex, 
there is no guarantee to obtain the required certificate.
Finally, we also provide a sufficient condition that permits to
obtain a numerical certificate of non convexity of $\K$ in the form of 
points $x,y\in\K$ which violate (\ref{usual}).

\section{Main result}

Observe that in fact, (\ref{usual}) is equivalent to the simpler condition
\begin{equation}
\label{usual1}
x,\,y\in\K\quad\Rightarrow\quad\,(x+y)/2\in\K.
\end{equation}
Indeed if $\K$ is convex then of course (\ref{usual1}) holds.
Conversely, if $\K$ is not convex then there exists $x,y\in\K$ and $0<\lambda<1$ such
that $z:=x+\lambda (y-x)$ is not in $\K$. As $\K$ is closed,
moving on the line segment $[x,y]$ from $x$ to $y$,
there necessarily exist $\tilde{x}\in\K$ (the first exit point of $\K$)
and $\tilde{y}\in\K$ (the first re-entry point in $\K$), with $\tilde{x}\neq\tilde{y}$.
Thus, as $\tilde{x}$ and $\tilde{y}$ are the only points of
$[\tilde{x},\tilde{y}]$ contained in $\K$, the mid-point
$\tilde{z}:=(\tilde{x}+\tilde{y})/2$ is not contained in $\K$. 

Given the basic closed semi-algebraic set $\K$ defined in (\ref{setk}), 
let $\widehat{\K}:=\K\times\K\subset\R^n\times\R^n$ be the associated basic closed semi-algebraic set
defined by:
\begin{equation}
\label{hatk}
\widehat{\K}\,:=\,\{(x,y)\::\:\hat{g}_j(x,y)\geq0,\:j=1,\ldots,2m\:\},
\end{equation}
where: \begin{eqnarray}
\label{a1}
(x,y)\,\mapsto\,\hat{g}_j(x,y)&:=&g_j(x),\quad j=1,\ldots,m\\
\label{a2}
(x,y)\,\mapsto\,\hat{g}_{j}(x,y)&:=&g_{m-j}(y),\quad j=m+1,\ldots,2m,
\end{eqnarray}
and let $P(\hat{g})\subset\R[x,y]$ be the preordering associated with the polynomials 
$(\hat{g}_j)$
that define $\widehat{\K}$ in (\ref{hatk}), i.e.,
\begin{equation}
\label{preordering}
P(\hat{g})\,:=\,\left\{\sum_{J\subseteq\{1,\ldots,2m\}}\phi_J\,\left(\prod_{k\in J}\hat{g}_k\right)\::\quad \phi_J\in\Sigma[x,y]\:\right\},
\end{equation}
where $\Sigma[x,y]\subset\R[x,y]$ is the set of s.o.s. polynomials.
Our necessary and sufficient condition of convexity is a follows.

\begin{thm}
\label{th1}
Let $\K\subset\R^n$ be the basic closed semi-algebraic set defined in (\ref{setk}). Then
$\K$ is convex if and only if for every $j=1,\ldots,m$, there exist
polynomials $\sigma_j,h_j\in P(\hat{g})$ and an integer $p_j\in\N$ such that:
\begin{equation}
\label{th1-1}
\sigma_j(x,y)\,g_j((x+y)/2)\,=\,g_j((x+y)/2)^{2p_j}+h_j(x,y),\quad
\forall \,x,y\in\R^n.
\end{equation}

\end{thm}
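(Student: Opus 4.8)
The plan is to reduce convexity to a polynomial nonnegativity statement on the product set $\widehat{\K}$ and then invoke Stengle's Positivstellensatz, exactly as was done for (\ref{neweq1}) in the introduction. First I would use the equivalence between (\ref{usual}) and (\ref{usual1}) established just above the theorem: $\K$ is convex if and only if $(x+y)/2\in\K$ whenever $(x,y)\in\widehat{\K}$. Since $(x+y)/2\in\K$ means precisely that $g_j((x+y)/2)\geq0$ for all $j=1,\ldots,m$, this says that, for each fixed $j$, the single function
$$ f_j(x,y)\,:=\,g_j\big((x+y)/2\big) $$
is nonnegative on $\widehat{\K}$. The point to record here is that $f_j\in\R[x,y]$, being the composition of the polynomial $g_j$ with the affine map $(x,y)\mapsto(x+y)/2$; hence the Positivstellensatz applies to it verbatim.

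Next I would apply Stengle's Positivstellensatz in its Nichtnegativstellensatz form \cite[Theor.\ 4.4.2]{roy} to the polynomial $f_j$ and the basic closed semi-algebraic set $\widehat{\K}$ of (\ref{hatk}), whose associated preordering is $P(\hat{g})$ in (\ref{preordering}). That theorem asserts that $f_j\geq0$ on $\widehat{\K}$ if and only if there exist $\sigma_j,h_j\in P(\hat{g})$ and $p_j\in\N$ with $\sigma_j\,f_j=f_j^{2p_j}+h_j$; substituting $f_j(x,y)=g_j((x+y)/2)$ reproduces (\ref{th1-1}). Since this is an identity of polynomials in the $2n$ variables, it holds for all $(x,y)\in\R^n\times\R^n$, matching the quantifier in the statement. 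Chaining the two equivalences then yields the theorem: $\K$ convex $\Leftrightarrow$ $f_j\geq0$ on $\widehat{\K}$ for every $j$ $\Leftrightarrow$ (\ref{th1-1}) holds for every $j$.

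For completeness I would spell out the elementary ``if'' direction, which does not need the full force of the Positivstellensatz. Fix a point $(x,y)\in\widehat{\K}$. Every element of the preordering $P(\hat{g})$ is nonnegative on $\widehat{\K}$, so $\sigma_j(x,y)\geq0$ and $h_j(x,y)\geq0$. If one had $f_j(x,y)<0$, then $f_j(x,y)^{2p_j}>0$, whence the right-hand side of (\ref{th1-1}) is strictly positive, while its left-hand side $\sigma_j(x,y)\,f_j(x,y)\leq0$; this contradiction forces $f_j(x,y)\geq0$. Hence every midpoint $(x+y)/2$ lies in $\K$, and $\K$ is convex by (\ref{usual1}).

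I do not expect a genuine obstacle: the paper has already converted convexity into nonnegativity of a polynomial over a basic closed set, and the Positivstellensatz is an off-the-shelf equivalence. The only steps requiring care are the verification that $f_j$ is a polynomial (so the theorem applies literally, with the correct set $\widehat{\K}$ and preordering $P(\hat{g})$) and the short sign analysis above; in particular one should note that the conclusion $f_j\geq0$ holds for every $p_j\in\N$, including $p_j=0$, since the argument only uses $f_j^{2p_j}>0$ at a point where $f_j\neq0$.
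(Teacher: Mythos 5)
Your proposal is correct and follows essentially the same route as the paper: reduce convexity to the midpoint condition (\ref{usual1}), restate it as nonnegativity of $g_j((x+y)/2)$ on $\widehat{\K}$, and invoke Stengle's Nichtnegativstellensatz with the preordering $P(\hat{g})$. Your explicit sign analysis for the ``if'' direction matches the certificate discussion the paper gives immediately after the theorem, so there is nothing further to add.
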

\begin{proof}
The set $\K$ is convex if and only if (\ref{usual1}) holds, that is, if and only if for every $j=1,\ldots,m$,
\begin{equation}
\label{test}
g_j((x+y)/2)\geq 0\qquad\forall \,(x,y)\in\widehat{\K}.
\end{equation}
But then (\ref{th1-1}) follows from a direct application of 
Stengle's Positivstellensatz \cite[Theor. 4.4.2, p. 92]{roy}
to (\ref{test}) (in fact, a Nichtnegativstellensatz version).
\end{proof}
The polynomials $\sigma_j,h_j\in P(\hat{g})$, $j=1,\ldots,m$, obtained in (\ref{th1-1}) indeed provide an obvious certificate of convexity for $\K$. This is because
if (\ref{th1-1}) holds then for every $x,y\in\K$ one has
$\sigma_j(x,y)\geq0$ and $h_j(x,y)\geq0$ because $\sigma_j,h_j\in P(\hat{g})$; and so
$\sigma_j(x,y)g_j((x+y)/2)\geq0$.
Therefore if $\sigma_j(x,y)>0$ then $g_j((x+y)/2)\geq0$ whereas
if $\sigma_j(x,y)=0$ then
$g_j((x+y)/2)^{2p_j}=0$ which in turn implies
$g_j((x+y)/2)=0$. Hence for every $j=1,\ldots,m$, 
$g_j((x+y)/2)\geq0$ for every $x,y\in\K$, that is,
(\ref{usual1}) holds and so $\K$ is convex.

\subsection*{A numerical certificate of convexity}
Again, as (\ref{th1-1}) is coming from Stengle's Positivstellensatz,
bounds $(p,d)$ are available for the integer $p_j$ and the degrees of 
the s.o.s. polynomials $\phi_J$ 
 appearing in the definition (\ref{preordering}) of polynomials 
 $\sigma_j,h_j\in P(\hat{g})$, respectively. Hence,
with same arguments as in the discussion just after (\ref{neweq1}),
checking whether (\ref{th1-1}) holds
reduces to check whether some (single)
approriately defined semidefinite program has a feasible solution.

Hence checking convexity of the basic closed semi-algebraic set 
$\K$ reduces to checking whether
each semidefinite program associated with (\ref{th1-1}), $j=1,\ldots,m$, 
has a feasible solution, and any feasible solution $\sigma_j,h_j\in P(\hat{g})$ of (\ref{th1-1}), $j=1,\ldots,m$, provides a certificate of convexity for $\K$. However the certificate is only "numerical" as the coefficients of the polynomials
$\sigma_j,h_j$ are obtained numerically and are subject to anavoidable
numerical inaccuracies. Moreover, the bounds $(p,d)$ being
out of reach, in practice one will
solve a semidefinite program associated with (\ref{th1-1}) but
for reasonable bounds $(p',d')\ll (p,d)$, hoping to obtain a solution when $\K$ is convex.

\subsection{An easier sufficient condition for convexity}
While Theorem \ref{th1} provides a necessary and sufficient condition for convexity, it is very expensive to check
because for each $j=1,\ldots,m$, the certificate of convexity $\sigma_j,h_j\in P(\hat{g})$ in (\ref{th1-1}) involves computing
$2\times 2^{2m}=2^{2m+1}$ s.o.s. polynomials $\phi_J$ in the definition
(\ref{preordering}) of $\sigma_j$ and $h_j$ . However, one also has the following sufficient condition:
\begin{thm}
\label{th2}
Let $\K\subset\R^n$ be the basic semi-algebraic set defined in (\ref{setk}). Then
$\K$ is convex if for every $j=1,\ldots,m$:
\begin{equation}
\label{th2-1}
g_j((x+y)/2)=\sigma_0(x,y)+
\sum_{k=1}^m\sigma^j_k(x,y) g_k(x)+\psi^j_k(x,y)\,g_k(y),\quad \forall x,y\in\R^n,
\end{equation}
for some s.o.s. polynomials $\sigma^j_k,\psi^j_k\in \Sigma[x,y]$.
\end{thm}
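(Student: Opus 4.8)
The plan is to recognize that the representation (\ref{th2-1}) is already, by inspection, a certificate of nonnegativity of $g_j((x+y)/2)$ on $\widehat{\K}$, so that the claim reduces immediately to the equivalence between convexity and condition (\ref{test}) established in the proof of Theorem \ref{th1}.

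First I would fix $j\in\{1,\ldots,m\}$ and take an arbitrary point $(x,y)\in\widehat{\K}$. By the definition (\ref{hatk})--(\ref{a2}), this means $g_k(x)\geq0$ and $g_k(y)\geq0$ for every $k=1,\ldots,m$. Since every s.o.s.\ polynomial is nonnegative on all of $\R^n\times\R^n$, the coefficients $\sigma_0,\sigma^j_k,\psi^j_k$ are each $\geq0$ at $(x,y)$. Hence every summand on the right-hand side of (\ref{th2-1}) is a product of nonnegative reals, so the whole right-hand side is $\geq0$. This yields $g_j((x+y)/2)\geq0$ for all $(x,y)\in\widehat{\K}$, which is exactly (\ref{test}). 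As already shown in the proof of Theorem \ref{th1}, condition (\ref{test}) holding for every $j$ is equivalent to (\ref{usual1}), which in turn is equivalent to convexity of $\K$; this closes the argument.

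There is essentially no obstacle in this direction, since the right-hand side of (\ref{th2-1}) simply exhibits $g_j((x+y)/2)$ as a member of the quadratic module generated by the $\hat{g}_k$, and any such member is trivially nonnegative on $\widehat{\K}$. The point worth underlining is rather why (\ref{th2-1}) is only a \emph{sufficient} condition. Unlike the preordering representation (\ref{th1-1}), which is equivalent to nonnegativity on $\widehat{\K}$ by Stengle's Positivstellensatz, membership in the quadratic module (\ref{th2-1}) is in general strictly stronger than nonnegativity on $\widehat{\K}$. Thus even a convex $\K$ may fail to admit such a representation---this being the price paid for the much simpler certificate (a single linear combination of the $2m$ generators, rather than the $2^{2m}$ products entering the definition (\ref{preordering}) of the full preordering).
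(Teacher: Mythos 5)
Your proof is correct and follows essentially the same route as the paper: observe that the quadratic-module representation (\ref{th2-1}) forces $g_j((x+y)/2)\geq0$ on $\widehat{\K}$ for every $j$, which is exactly condition (\ref{usual1}) and hence convexity of $\K$. The additional remarks on why the condition is only sufficient also match the paper's own discussion.
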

\begin{proof}
Observe that if (\ref{th2-1}) holds then 
$g_j((x+y)/2)\geq0$ for every $j=1,\ldots,m$ and all $(x,y)\in\widehat{\K}$;
and so $\K$ is convex because (\ref{usual1}) holds.
\end{proof}
Again, checking whether (\ref{th2-1}) holds with an apriori bound $2d$ on the degrees of the s.o.s.
polynomials $\sigma^j_k,\psi^j_k$, reduces to solving a semidefinite program. But it now only involves $2m+1$ unknown
s.o.s. polynomials (to be compared with $2^{2m+1}$ previously). On the other hand, 
Theorem \ref{th2} only provides a sufficient condition, that is, even if $\K$ is convex it may happen that
(\ref{th2-1}) does not hold.

However, when $\K$ is compact, convex, and if for some $M>0$ the quadratic polynomial
$x\mapsto M-\Vert x\Vert^2$ can be written
\begin{equation}
\label{quadratic}
M-\Vert x\Vert^2 \,=\,\sigma_0(x)+\sum_{k=1}^m\sigma_k(x)\,g_j(x),\end{equation}
for some s.o.s. polynomials $(\sigma_k)\subset\Sigma[x]$, then
(\ref{th2-1}) is {\it almost} necessary because for every $\epsilon>0$:
\begin{equation}
\label{final}
g_j((x+y)/2)+\epsilon=\sigma^j_{0\epsilon}(x,y)
+\sum_{k=1}^m\sigma^j_{k\epsilon}(x,y) g_k(x)+\psi^j_{k\epsilon}(x,y)\,g_k(y),
\end{equation}
for some s.o.s. polynomials $\sigma^j_{k\epsilon},\psi^j_{k\epsilon}\in \Sigma[x,y]$.
Indeed, consider the quadratic polynomial
\[(x,y)\mapsto \Delta(x,y)\,:=\,2M-\Vert x\Vert^2-\Vert y\Vert^2.\]
From (\ref{quadratic}), $\Delta$ belongs to the quadratic module
$Q(\hat{g})\subset\R[x,y]$ generated by the polynomials $\hat{g}_k$ that define $\widehat{\K}$, that is, the set
\[Q(\hat{g}):=
\left\{\sigma_0(x,y)+\sum_{k=1}^m\sigma_k(x,y)\,g_k(x)+\psi_k(x,y)\,g_k(y)\::\:
\sigma_k,\psi_k\in\Sigma[x,y]\right\}.\]
In addition, its level set $\{(x,y)\,:\,\Delta(x,y)\geq0\}$ is compact,
which implies that $Q(\hat{g})$ is  Archimedean (see e.g. \cite{markus}).
Therefore, as $g_j((x+y)/2)+\epsilon>0$ on $\widehat{\K}$,
(\ref{final}) follows from Putinar's Positivstellensatz \cite{putinar}.

\subsection{A certificate on non-convexity}

In this final section we provide a numerical certificate of {\it non convexity} of $\K$
when the optimal value of a certain semidefinite program is strictly negative 
and some moment matrix associated with an optimal solution satisfies
a certain rank condition.

Given a sequence $\z=(z_{\alpha\beta})$ indexed in the canonical basis
$(x^\alpha y^\beta)$ of $\R[x,y]$, let $L_\z:\R[x,y]\to\R$ be the linear functional:
\[f\quad(=\sum_{\alpha,\beta}f_{\alpha\beta}\,x^\alpha y^\beta)\:\mapsto\:L_\z(f)\,=\,
\sum_{\alpha,\beta}f_{\alpha\beta}\,z_{\alpha \beta},\]
and as in \cite{lasserresiopt}, 
the {\it moment} matrix $M_s(\z)$ associated with $\z$ is the real symmetric matrix
with rows and columns indexed in the
the canonical basis $(x^\alpha y^\beta)$ and with entries
\[M_s(\z)((\alpha,\beta),(\alpha',\beta'))\,=\,z_{(\alpha+\alpha')(\beta+\beta')}\]
for every $(\alpha,\beta),(\alpha',\beta')\in\N^{2n}_s$, where
$\N^{n}_s:=\{\alpha\in\N^n\,:\,\sum_i\alpha_i\leq s\}$.

Similarly, with a polynomial $(x,y)\mapsto \theta(x,y)=\sum_{\alpha,\beta}
\theta_{\alpha\beta}\,x^\alpha y^\beta$, the {\it localizing} matrix $M_s(\theta\,\z)$
associated with $\theta$ and $\z$, is the real symmetric matrix with rows and columns indexed in the
the canonical basis $(x^\alpha y^\beta)$ and with entries
\[M_s(\theta\,\z)((\alpha,\beta),(\alpha',\beta'))\,=\,\sum_{\alpha",\beta"}
\theta_{\alpha"\beta" }\,z_{(\alpha+\alpha'+\alpha")(\beta+\beta'+\beta")},\]
for every $(\alpha,\beta),(\alpha',\beta')\in\N^{2n}_s$.

Let $v_k:=\lceil({\rm deg}\,\hat{g}_k)/2\rceil$, $k=1,\ldots,2m$, and
for every $j=1,\ldots,m$, and $s\geq v:=\max_kv_k$, consider the 
semidefinite program:
\begin{equation}
\label{sdp}
\left\{\begin{array}{lll}
\rho_{js}=&\displaystyle\min_\z&L_\z(g_j((x+y)/2))\\
&\mbox{s.t.}&M_s(\z)\,\succeq0\\
&&M_{s-v_k}(\hat{g}_k\,\z)\,\succeq0,\quad k=1,\ldots,2m\\
&&\z_0\,=1,\end{array}\right.\end{equation}
where for a real symmetric matrix $A$, 
the notation $A\succeq0$ stands for $A$ is positive semidefinite.
The semidefinite program (\ref{sdp}) is a convex relaxation of the global optimization problem
\[g_j^*:=\min_{x,y}\:\{g_j((x+y)/2)\::\:(x,y)\in\widehat{\K}\:\}\]
and so $\rho_{js}\leq g_j^*$ for every $s\geq v$. Moreover, $\rho_{js}\uparrow g_j^*$ as $s\to\infty$; for more details see e.g. \cite{lasserresiopt}.
\begin{thm}
\label{th3}
Let $\K\subset\R^n$ be as in (\ref{setk}) and let $\z$ be an optimal solution of the semidefinite program
(\ref{sdp}) with optimal value $\rho_{js}$. If $\rho_{js}<0$ and
\begin{equation}
\label{th3-1}
{\rm rank}\,M_s(\z)\,=\,{\rm rank}\,M_{s-v}(\z)\quad (=:t)
\end{equation}
then the set $\K$ is not convex and one may extract $t$ points
$(x(i),y(i))\in\widehat{\K}$, $i=1,\ldots,t$, such that 
\[g_j((x(i)+y(i))/2)\,<\,0,\qquad \forall\,i=1,\ldots,t.\]
Hence each mid-point $(x(i)+y(i))/2\not\in\widehat{\K}$ is a certificate that $\K$ is not convex.
\end{thm}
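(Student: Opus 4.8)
The plan is to prove Theorem~\ref{th3} by invoking the flat-extension / finite-atomic-measure machinery for the moment problem, which underlies the convergence result cited just after the definition of the semidefinite program~(\ref{sdp}). First I would observe that $\z$ is a feasible solution of~(\ref{sdp}) satisfying $M_s(\z)\succeq0$ together with the localizing conditions $M_{s-v_k}(\hat{g}_k\,\z)\succeq0$ for $k=1,\ldots,2m$. The crucial hypothesis is the rank condition~(\ref{th3-1}), namely ${\rm rank}\,M_s(\z)={\rm rank}\,M_{s-v}(\z)=t$. By the flat-extension theorem of Curto and Fialkow (as used in the moment-matrix approach to polynomial optimization, see~\cite{lasserresiopt}), this rank stabilization guarantees that $\z$ is the moment sequence of a finitely supported (atomic) representing measure $\mu$ on $\R^{2n}$, with exactly $t$ atoms, whose support is contained in $\widehat{\K}$.

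The key steps, in order, would then be the following. First, I would extract the $t$ support points: the flat extension produces points $(x(i),y(i))\in\R^n\times\R^n$, $i=1,\ldots,t$, and positive weights $\lambda_i>0$ with $\sum_i\lambda_i=1$ (using $\z_0=1$) such that $L_\z(f)=\sum_{i=1}^t\lambda_i\,f(x(i),y(i))$ for every polynomial $f$ of degree at most $2s$. Second, since the localizing matrices are positive semidefinite and the measure is flat, each atom must satisfy the defining inequalities, i.e.\ $\hat{g}_k(x(i),y(i))\geq0$ for all $k=1,\ldots,2m$, so that $(x(i),y(i))\in\widehat{\K}=\K\times\K$. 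Third, I would use the hypothesis $\rho_{js}<0$ together with the quadrature formula to analyze the sign of $g_j((x(i)+y(i))/2)$ at the atoms.

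The sign analysis is the heart of the argument. We have
\[
\rho_{js}\,=\,L_\z(g_j((x+y)/2))\,=\,\sum_{i=1}^t\lambda_i\,g_j((x(i)+y(i))/2)\,<\,0 .
\]
Since the weights are nonnegative, at least one atom must satisfy $g_j((x(i)+y(i))/2)<0$; this already yields a pair $(x(i),y(i))\in\widehat{\K}$ whose midpoint violates~(\ref{usual1}), hence a certificate of non-convexity. To obtain the stronger claim that \emph{every} extracted atom has $g_j((x(i)+y(i))/2)<0$, I would note that $\rho_{js}$ is the \emph{minimum} over feasible $\z$, so the atomic measure supported on $\{(x(i),y(i))\}$ is optimal; by optimality each atom must attain the common minimal value of $g_j((x+y)/2)$ over the finitely many extracted points, and since their weighted average is $\rho_{js}<0$ with all weights positive, each attains exactly $\rho_{js}<0$. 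Concretely, each $(x(i),y(i))$ is a global minimizer of $g_j((x+y)/2)$ on $\widehat{\K}$ (this is the standard consequence of the flat-extension optimality, again see~\cite{lasserresiopt}), so the common value equals $g_j^*=\rho_{js}<0$, giving $g_j((x(i)+y(i))/2)=\rho_{js}<0$ for all $i$. Consequently each midpoint $(x(i)+y(i))/2\notin\K$ while $x(i),y(i)\in\K$, contradicting~(\ref{usual1}) and certifying that $\K$ is not convex.

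I expect the main obstacle to be the rigorous invocation of the flat-extension theorem and the verification that the extracted atoms are genuine global minimizers rather than merely that their average is negative. The rank condition~(\ref{th3-1}) is precisely what licenses the passage from the matrix $\z$ to an atomic measure with support in $\widehat{\K}$, and the optimality of $\rho_{js}$ is what upgrades "some atom is negative" to "every atom attains $\rho_{js}$"; both are by now standard in the moment-SOS literature, so the proof reduces to citing and correctly applying these results rather than proving them afresh.
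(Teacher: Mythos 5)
Your proposal is correct and follows essentially the same route as the paper: the flat-extension theorem of Curto--Fialkow turns the rank condition into a $t$-atomic representing measure supported on $\widehat{\K}$, and then the sandwich $g_j^*\geq\rho_{js}=\sum_i\gamma_i\,g_j((x(i)+y(i))/2)\geq\sum_i\gamma_i\,g_j^*=g_j^*$ forces every atom's midpoint value to equal $g_j^*=\rho_{js}<0$. The only cosmetic difference is that you cite the ``atoms are global minimizers'' fact as standard, whereas the paper writes out this two-line inequality explicitly.
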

\begin{proof}
By the flat extension theorem of Curto and Fialkow \cite{curto} (see also Laurent \cite{laurent}), the rank condition (\ref{th3-1}) ensures that
$\z$ is the moment sequence of a $t$-atomic probability measure $\mu$ supported on $\widehat{\K}$. That is:
\[z_{\alpha\beta }\,=\,\int_{\widehat{\K}} x^\alpha\,y^\beta\,d\mu,\qquad
\forall\,(\alpha,\beta)\in\N^{2n}_{2s}.\]
Let $(x(i),y(i))_{i=1}^t\subset\widehat{\K}$ be the support of $\mu$ which
is a positive linear combination of Dirac measures $\delta_{(x(i),y(i))}$
with positive weights $(\gamma_i)$ such that $\sum_i\gamma_i=1$. Then
\begin{eqnarray*}
g_j^*\geq \rho_{js}=L_\z(g_j((x+y)/2))&=&\int_{\widehat{\K}}
g_j((x+y)/2)\,d\mu\\
&=&\sum_{i=1}^t\gamma_i\,g_j((x(i)+y(i))/2)\\
&\geq&\sum_{i=1}^t\gamma_i\,g_j^*\,=\,g_j^*,
\end{eqnarray*}
which shows that $\rho_{js}=g_j^*$ and so,
$g_j((x(i)+y(i))/2)=g_j^*$ for every $i=1,\ldots,t$. But then
the result follows from $\rho_{js}<0$.
\end{proof}

\end{document}